\DeclareMathOperator{\Tr}{Tr}
\DeclareMathOperator{\Supp}{Supp}
\newcommand {\N}{\mathbb{N}}
\newcommand {\R}{\mathbb{R}}
\newtheorem{theorem}{Theorem}[section]
\newtheorem{lemma}[theorem]{Lemma}
\theoremstyle{definition}
\newtheorem{prop}[theorem]{Proposition}
\theoremstyle{remark}
\newtheorem{remark}[theorem]{Remark}
\numberwithin{equation}{section}
\begin{document}

\title[BKM with frequency and temporal localization]{A Beale-Kato-Majda criterion with optimal frequency and temporal localization}

\author{Xiaoyutao Luo }
\address{Department of Mathematics, Statistics and Computer Science,
	University of Illinois At Chicago}
\curraddr{Department of Mathematics, Statistics and Computer Science, University of Illinois At Chicago, Chicago, Illinois 60607}
\email{xluo24@uic.edu}


\begin{abstract}
We obtain a Beale-Kato-Majda-type criterion with optimal frequency and temporal localization for the 3D Navier-Stokes equations. Compared to previous results our condition only requires the control of Fourier modes below a critical frequency, whose value is explicit in terms of time scales. As applications it yields a strongly frequency-localized condition for regularity in the space $B^{-1}_{\infty,\infty}$ and also a lower bound on the decaying rate of $L^p$ norms $2\leq p <3$ for possible blowup solutions. The proof relies on new estimates for the  cutoff dissipation and energy at small time scales which might be of independent interest.
\end{abstract}
\maketitle
\section{introduction}
Consider the three-dimensional incompressible Navier-Stokes equations (3D NSE)
\begin{align}\label{eq:3dNSE}
& \partial_t u+ (u\cdot \nabla )u -\nu \Delta u = -\nabla p \nonumber \\
& \nabla \cdot u =0  \\
& u(x,0) = u_0(x) \nonumber
\end{align}
where $u(x,t)$ is the unknown velocity, $p(x,t)$ is the scalar pressure. In this note we normalize the kinematic viscosity $\nu >0$ to $1$ and consider spatial domain $\Omega = \R^3$ or $ \mathbb{T}^3$.

In \cite{LERAY} Leray constructed global weak solutions for initial datum with finite energy. Such solutions obey the energy inequality $\|u(t) \|_2 + 2\nu \int_{t_0}^{t} \|\nabla u \|_2^2 \leq \|u(t_0)\|_2$ for all $t \in [0,T]$ and a.e $t_0 \in [0,t]$ including $0$. Although the uniqueness and global regularity of such solutions remain open questions. The classical result of Ladyzhenskaya-Prodi-Serrin states that if a Leray-Hopf weak solution lies in $L^q_tL^p_x$ for $\frac{2}{q} + \frac{3}{p} \leq 1$ with $p>3$, then the solution is smooth and unique in the class of all Leray-Hopf weak solutions with the same initial data. Since then there have been extensive studies on refining such type of criteria, for instances \cite{BG,BV,cs-reg,CT,FJNZ}. The difficult end point case $L^\infty_t L^3_x$ was proven with the work of Escauriaza-Seregin-\v Sver\' ak \cite{ESS} and later improved to $L^\infty B^{-1+3/s}_{s,q}$ with $3 <s,q<\infty$ in \cite{GKP} by Gallagher, Koch and Planchon. 

Another borderline case when the time integrability index is $1$, which is the main focus of this paper, is the Beale-Kato-Majda criterion \cite{BKM}:
\begin{equation}\label{eq:classicalBKM}
\int_0^T \|\nabla \times u \|_\infty dt <\infty \quad \Leftrightarrow \text{u is regular on} \, [0,T].
\end{equation} 
This condition \eqref{eq:classicalBKM} was weakened by Planchon in \cite{p} to
\begin{equation}\label{eq:BKM_Planchon}
\lim_{\epsilon\to 0} \sup_q \int_{T-\epsilon}^T \| \Delta_q (\nabla \times u) \|_\infty dt <c  , 
\end{equation} 
where $\Delta_q$ is the $q$'s Littlewood-Paley projection and the constant $c>0$ is small.

Note that the above two conditions \eqref{eq:classicalBKM} and \eqref{eq:BKM_Planchon} do not require any dissipation and hence also applies to the 3D Euler equations. Therefore it is natural to investigate the effect of dissipation in such type of BKM-like criterion. In \cite{cs-reg} Cheskidov and Shvydkoy proved the following improvement over the result of Planchon:
\begin{equation}\label{eq:BKM_CS}
  \int_0^T \|   \nabla \times u_{\leq Q(t)} \|_{B^{0}_{\infty,\infty}} dt <\infty \quad \text{for some wavenumber} \, \Lambda(t) = 2^{Q(t)} ,
\end{equation} 
where the wavenumber $\Lambda(t)$ is defined so that roughly speaking above $\Lambda(t)$, i.e. when $\lambda_q \geq \Lambda(t)$ all the norms $\lambda_q^{-1} \|u_q(t) \|_\infty$ are small and hence the linear term dominates. Later on the condition was further weakened in \cite{CD} by Cheskidov and Dai requiring
\begin{equation}\label{eq:BKM_CD}
\limsup_{q\to \infty } \int_{T/2}^{T} 1_{q \leq Q(\tau)} \lambda_q \| u_q \|_\infty d \tau \leq c .
\end{equation}

At last we note that similar idea of frequency localization was also used in \cite{BG} to obtain refinement of Ladyzhenskaya-Prodi-Serrin-type criteria under an extra assumption that $u \in C_t B^{-\epsilon}_{\infty,\infty}$ for some $0< \epsilon <1$.

\subsection{Main results}

One of the main motivations of this paper is to find the optimal frequency and temporal localization in the BKM-type criteria. Throughout this paper we use the notation $I_p(T)=[T-\lambda_p^{-2},T)$ for each $p\in \N$ where $T$ is the possible time of blowup and $\lambda_p =2^p$. The localized BKM criterion states as follows. 
\begin{theorem}\label{thm:BKM}
There exist universal constants $c$ and $\delta_{BKM}$ such that if a regular solution $u$ of \eqref{eq:3dNSE} on $(0,T)$ satisfies that
\begin{equation}\label{eq:BKM}
\limsup_{p \to \infty}   \int_{T -c\lambda_p^{-2}}^T  \| \nabla u_{\leq p} \|_{\infty}  dt \leq \delta_{BKM}  ,
\end{equation}
then $u$ is regular on $(0,T]$.
\end{theorem}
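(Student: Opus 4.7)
\emph{Plan of proof.} The plan is to argue by contradiction: suppose $u$ is regular on $(0,T)$ but does not extend regularly past $T$, and fix $p\in\N$ so large that \eqref{eq:BKM} holds on the critical interval $I_p(T)=[T-c\lambda_p^{-2},T]$. I would then show, via an energy estimate localized above frequency $\lambda_p$, that $\|u_{>p}(t)\|_2^2$ (and, after a brief bootstrap, an $H^s$ norm with $s>3/2$) stays bounded on $I_p(T)$, contradicting blowup at $T$.

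To set up the differential inequality, apply $\Delta_q$ to \eqref{eq:3dNSE}, pair with $u_q$, and decompose the nonlinear term by Bony's paraproduct. The diagonal transport contribution $(u_{\leq q-2}\cdot\nabla)u_q$ tested against $u_q$ vanishes by incompressibility, leaving a commutator $[\Delta_q,u_{\leq q-2}\cdot\nabla]u_{\sim q}$ that standard Bernstein estimates bound by $\|\nabla u_{\leq q-2}\|_\infty\,\|u_{\sim q}\|_2$; the low-high balanced and high-high remainder terms are treated similarly. Splitting $\|\nabla u_{\leq q-2}\|_\infty \le \|\nabla u_{\leq p}\|_\infty + \|\nabla u_{p<\cdot\leq q-2}\|_\infty$, the first summand contributes, after integration over $I_p(T)$, at most $\delta_{BKM}$ by hypothesis, so the low-frequency portion of the paraproduct is dealt with directly by \eqref{eq:BKM}.

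The main obstacle is to absorb the second summand together with the genuinely high-high contributions $u_{>p}\cdot\nabla u_{>p}$, neither of which is visible to \eqref{eq:BKM}. These must be controlled by dissipation, and the time length $|I_p(T)|=c\lambda_p^{-2}$ must be exploited sharply: this is exactly the diffusion timescale at frequency $\lambda_p$, so heuristically the modes above $\lambda_p$ should not accumulate over $I_p(T)$. Concretely, combining Bernstein in space with H\"older in time over an interval of length $\lambda_p^{-2}$ should convert a cutoff-dissipation bound on $\int_{I_p(T)}\|\nabla u_{>p}\|_2^2\,dt$ into a smallness bound for the nonlinear quantity $\int_{I_p(T)}\|\nabla u_{>p}\|_\infty\,dt$. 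The delicate point is to keep the $\lambda_q^{3/2}$ loss from $L^2\to L^\infty$ Bernstein summable in $q>p$; it is here that the ``new estimates for the cutoff dissipation and energy at small time scales'' advertised in the abstract are essential, and I expect them to leverage both the diffusive factor $\lambda_p^{-1}$ coming from $|I_p(T)|^{1/2}$ and the decay of the cutoff energy $\|u_{>p}(t)\|_2^2$ from some time in $I_p(T)$ backward.

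Once these high-frequency terms are absorbed, summing the dyadic bounds produces an inequality of the form $\tfrac{d}{dt}\|u_{>p}\|_2^2 + \|\nabla u_{>p}\|_2^2 \lesssim \bigl(\|\nabla u_{\leq p}\|_\infty + \eta_p\bigr)\|u_{>p}\|_2^2$ on $I_p(T)$, where $\eta_p$ is the dissipation-based error that is small for $p$ large. Grönwall's inequality together with the smallness of $\delta_{BKM}$ and $\eta_p$ then yields a uniform bound on $\|u_{>p}(t)\|_2^2$ up to $T$; combined with the already-known smoothness at $T-c\lambda_p^{-2}$ and a standard bootstrap in the Sobolev scale, this contradicts the assumed blowup and completes the proof.
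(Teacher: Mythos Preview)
Your plan correctly isolates the difficulty but the mechanism you propose does not close, for two reasons. First, the paraproduct splitting you set up forces you to control $\|\nabla u_{p<\cdot\le q-2}\|_\infty$ for every $q>p$, and this is exactly the supercritical quantity that cannot be absorbed by dissipation on $I_p(T)$: Bernstein costs $\lambda_r^{5/2}$ from $L^2$ to $L^\infty$ of the gradient, and the factor $\lambda_p^{-1}$ from $|I_p|^{1/2}$ does not cancel it. The paper avoids this obstruction altogether by using a different flux estimate (Proposition~\ref{prop:cutoff-e}), based on the Constantin--E--Titi commutator decomposition, which bounds the entire flux $\Pi_{\ge p}$ by $\|\nabla u_{<p}\|_\infty$ \emph{only}, multiplied by purely $L^2$-energy quantities --- so intermediate-frequency gradients never enter and \eqref{eq:BKM} controls the full nonlinear contribution directly.

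Second, even with that flux bound a single Gr\"onwall at one fixed $p$ is not enough and is not what the paper does. The energy multipliers in the flux are sums $\sum_{r\le p}\lambda_{r-p}^{-2}\|u_r\|_2^2+\sum_{r>p}\|u_r\|_2^2$, and after time-integration over $I_{p-b}$ they couple all scales $r\le p$ through $\sup_{I_r}\|u_{\ge r}\|_2^2$. The paper therefore works with the whole family $E_p=\sup_{I_p}\|u_{\ge p}\|_2^2$ and $D_p=\int_{I_p}\|\nabla u_{\ge p}\|_2^2$, obtains a recursion $\max\{E_p,D_p\}\le \lambda_b^{-\alpha}D_{p-b}+c\delta\sum_{r\le p-b}\lambda_{|r-p+b|}^{-2}E_r$ (Lemma~\ref{lemma:1st_iter} and Proposition~\ref{prop:BKM-to-1stiteration}), and runs an induction in $p$ to force $\max\{E_p,D_p\}\lesssim\lambda_p^{-\alpha}$ for some $\alpha>1$ (Lemma~\ref{lemma:twoconditions-to-dissipation}). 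Only then does a separate, non-trivial argument (Theorem~\ref{thm:dissip-to-reg}, all of Section~\ref{sec:4}) upgrade the dissipation decay $\lambda_p D_p\to 0$ to an $H^1$ bound; your ``brief bootstrap in the Sobolev scale'' is in fact that entire section.
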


We note that the importance here is that the frequency localization is explicit in terms of time, which is natural in view of the parabolic scaling of the NSE $u_\lambda (x,t) = \lambda u(\lambda x,\lambda^2 t)$.

As a consequence of this theorem we obtain regularity condition involving only a fixed number of Fourier modes around the critical frequency at each time scale, which is a strong frequency localization criterion in $B^{-1}_{\infty,\infty}$, the largest critical space for the 3D NSE.

\begin{theorem}\label{thm:ESS_B-1_infty}
There exist a universal constant $\delta_{B^{-1}_{\infty,\infty}}$ and a family of frequency localization operators $\widetilde{\Delta}_p = \sum_{|r-p| \leq b}  \Delta_{r}$ where $b$ is a fixed number with the following property: If a regular solution $u$ of \eqref{eq:3dNSE} on $(0,T)$ verifies the bound
\begin{equation}\label{eq:ESS_B-1_infty}
\limsup_{p \to \infty} \sup_{ I_p}  \|\widetilde{\Delta}_p u \|_{B^{-1}_{\infty,\infty}}   \leq \delta_{B^{-1}_{\infty,\infty}}    ,
\end{equation}
then $u$ is regular on $(0,T]$.
\end{theorem}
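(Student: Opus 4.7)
The plan is to reduce Theorem \ref{thm:ESS_B-1_infty} directly to Theorem \ref{thm:BKM} by showing that \eqref{eq:ESS_B-1_infty} implies \eqref{eq:BKM}, provided $\delta_{B^{-1}_{\infty,\infty}}$ is chosen small enough in terms of $\delta_{BKM}$ and universal Bernstein constants. For each sufficiently large $p$, I will estimate the BKM integrand $\|\nabla u_{\leq p}\|_\infty$ block by block in frequency and then integrate against the small time window.

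The key ingredient is a Bernstein-type gain. I take the fixed parameter $b\geq 1$ in the definition $\widetilde{\Delta}_q=\sum_{|r-q|\leq b}\Delta_r$, so that quasi-orthogonality of the Littlewood--Paley projections gives $\Delta_q u=\Delta_q \widetilde{\Delta}_q u$ for every $q$. Since $\widetilde{\Delta}_q u$ is Fourier-localized in an annulus of size $\lambda_q$, Bernstein's inequality produces
\[
\|\nabla \Delta_q u\|_\infty \lesssim \lambda_q \|\widetilde{\Delta}_q u\|_\infty \lesssim \lambda_q^2 \|\widetilde{\Delta}_q u\|_{B^{-1}_{\infty,\infty}},
\]
with implicit constants depending only on $b$. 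This converts the $B^{-1}_{\infty,\infty}$ smallness at scale $q$ into a quantitative pointwise bound on the BKM integrand at that scale.

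Next, for every $q$ with $q\leq p-O(1)$ one has the nesting $[T-c\lambda_p^{-2},T)\subset I_q$, so a triangle inequality and integration produce
\[
\int_{T-c\lambda_p^{-2}}^T \|\nabla u_{\leq p}\|_\infty\,dt \lesssim c\lambda_p^{-2}\sum_{q\leq p}\lambda_q^2 \sup_{I_q}\|\widetilde{\Delta}_q u\|_{B^{-1}_{\infty,\infty}}.
\]
Splitting this sum at a threshold $q_0$ chosen from \eqref{eq:ESS_B-1_infty} so that $\sup_{I_q}\|\widetilde{\Delta}_q u\|_{B^{-1}_{\infty,\infty}}\leq \delta_{B^{-1}_{\infty,\infty}}+\epsilon$ for $q\geq q_0$, and using the energy inequality together with Bernstein to bound the finitely many low-frequency terms, the high-frequency tail becomes a geometric sum comparable to $\lambda_p^2$ whose product with the prefactor $\lambda_p^{-2}$ collapses to a constant multiple of $\delta_{B^{-1}_{\infty,\infty}}+\epsilon$, while the low-frequency contribution vanishes as $p\to\infty$. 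Sending $p\to\infty$, then $\epsilon\to 0$, and choosing $\delta_{B^{-1}_{\infty,\infty}}$ small, \eqref{eq:BKM} holds and Theorem \ref{thm:BKM} closes the argument.

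I do not anticipate serious analytic obstacles; the proof is essentially Littlewood--Paley bookkeeping. The one structural point on which the reduction hinges is the exact matching of scales between the width $\lambda_p^{-2}$ of the BKM time window and the reciprocal of the top squared frequency $\lambda_p^2$ arising from the geometric sum over $q\leq p$. This is precisely the optimal temporal localization delivered by Theorem \ref{thm:BKM}, and any wider window there would unbalance the geometric sum and prevent the deduction.
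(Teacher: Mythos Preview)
Your proposal is correct and follows essentially the same route as the paper: reduce \eqref{eq:ESS_B-1_infty} to \eqref{eq:BKM} via the block-by-block Bernstein gain $\|\nabla\Delta_q u\|_\infty \lesssim \lambda_q^2\|\widetilde{\Delta}_q u\|_{B^{-1}_{\infty,\infty}}$, then split the resulting geometric sum into low and high frequencies. The only cosmetic differences are that the paper splits at $q=p/2$ (yielding a $\lambda_p^{-3/4}$ low-mode error) while you split at a fixed $q_0$, and the paper handles the top $b$ blocks $p-b<q\le p$ by re-indexing the sum through $\widetilde{\Delta}_r$ with $r\le p-b$ so that the nesting $I_{p-b}\subset I_r$ holds throughout; in your display the bound $\sup_{I_q}$ is not literally valid for those top few $q$, but the same re-indexing (or simply absorbing $\Delta_q$ into $\widetilde{\Delta}_{q-b}$) fixes this immediately.
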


Another application of Theorem \ref{thm:BKM} concerns the behavior of lower order Lebesgue norms ($L^p$ for $2\leq p < 3$) in the spirit of a celebrated result by Leray on the blowup speed of the Lebesgue norms: if a regular solution $u$ blows up at time $T$ then 
\begin{equation}\label{eq:speed_Leray}
\|u(t) \|_p \gtrsim_p \frac{1}{(T-t)^\frac{p-3}{2p}}
\end{equation}
for any $3 < p \leq \infty$. This together with the result of Seregin \cite{Seregin} and shows that if a blowup occurs then  $\| u\|_p$ becomes unbounded for any $p\geq 3$. It remains an open question whether the supercritical norms $\|u(t) \|_p$ for $2< p< 3$ become unbounded when the solution blows up. And to the author's knowledge, there is no result so far concerning the behavior of $L^p$ norms for $2< p< 3$.

We are able to obtain a partial result in this direction. Noticing that when $2\leq p<3$ the right hand side of \eqref{eq:speed_Leray} goes to zero as $t\to T$-, if a solution blows up, then there is a lower bound on the decaying rate of $\|u(t) \|_p$ for $2\leq p < 3$.

\begin{theorem}\label{thm:leray_lp_norm}
If a regular solution $u$ to \eqref{eq:3dNSE} on $(0,T)$ blows up at time $T$ then 
\begin{equation}\label{eq:leray_lp_norm}
\limsup_{t \to T} \|u_{\leq q(t)} (t) \|_p  (T-t)^\frac{p-3}{2p} \gtrsim \delta_{L^p}
\end{equation}
for any $2 \leq p \leq 3$, where $q(t) = [\log_2(T-t)^{ -1/2 }_+]$ and $\delta_{L^p}$ is a universal constant depending only on $p$.
\end{theorem}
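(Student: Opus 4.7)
The plan is to deduce Theorem~\ref{thm:leray_lp_norm} from Theorem~\ref{thm:BKM} contrapositively: assuming the $\limsup$ in \eqref{eq:leray_lp_norm} is smaller than a suitably small $\delta_{L^p}$, I will verify the BKM hypothesis \eqref{eq:BKM} on every window $[T-c\lambda_N^{-2},T)$, forcing regularity at $T$ and contradicting blowup. The bridge between the two statements is Bernstein's inequality together with the elementary observation that for the windows the truncation level $q(t)$ is already at least $N$: if $t\in [T-c\lambda_N^{-2},T)$ with $c\leq 1$ then $(T-t)^{-1/2}\geq c^{-1/2}\lambda_N\geq \lambda_N$, hence $q(t)\geq N$.

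Under the standing assumption, pick $t_0<T$ with $\|u_{\leq q(t)}(t)\|_p \leq \delta\,(T-t)^{(3-p)/(2p)}$ on $[t_0,T)$. Fix $N$ large enough that $T-c\lambda_N^{-2}\geq t_0$. Since $q(t)\geq N$ on the window, $u_{\leq N}(t)$ is the further Littlewood--Paley low-frequency projection of $u_{\leq q(t)}(t)$, and $L^p$-boundedness of that projection for $1<p<\infty$ gives
\[
\|u_{\leq N}(t)\|_p \lesssim \|u_{\leq q(t)}(t)\|_p \leq \delta\,(T-t)^{(3-p)/(2p)}.
\]
Applying Bernstein's inequality to the spectrally localized function $u_{\leq N}$ transfers this subcritical bound to the supercritical quantity appearing in \eqref{eq:BKM}, paying the factor $\lambda_N^{1+3/p}=\lambda_N^{(p+3)/p}$. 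Since $(3-p)/(2p)>-1$, the resulting power of $T-t$ is integrable near $T$, and the change of variable $s=T-t$ yields
\[
\int_{T-c\lambda_N^{-2}}^{T}\|\nabla u_{\leq N}(t)\|_\infty\,dt \;\lesssim\; \delta\,\lambda_N^{(p+3)/p}\cdot c^{(p+3)/(2p)}\,\lambda_N^{-(p+3)/p} \;=\; C_p\, c^{(p+3)/(2p)}\,\delta,
\]
with all powers of $\lambda_N$ cancelling because $1+\tfrac{3}{p}=\tfrac{p+3}{p}$; this exact cancellation is the critical-scaling manifestation at the heart of the argument.

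Choosing $\delta_{L^p}$ so small that $C_p\,c^{(p+3)/(2p)}\,\delta_{L^p}\leq \delta_{BKM}$ places $u$ in the hypothesis of Theorem~\ref{thm:BKM} uniformly in large $N$, which forces regularity on $(0,T]$ and contradicts blowup at $T$. There is no genuine obstacle once Theorem~\ref{thm:BKM} is in hand: the theorem is essentially a direct translation of the $L^p$-decay assumption into a BKM-type bound via Bernstein, and the Leray-type exponent $(3-p)/(2p)$ in \eqref{eq:leray_lp_norm} is fixed precisely so that the powers of $\lambda_N$ balance. The only subtlety worth mentioning is the matching of time scales $q(t)\geq N$ on the narrower BKM window, which is immediate from the definition $q(t)=[\log_2(T-t)^{-1/2}_+]$ provided the BKM constant $c$ from Theorem~\ref{thm:BKM} is taken to be $\leq 1$ (or an $O(1)$ shift is absorbed into $\delta_{L^p}$).
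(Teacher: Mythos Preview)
Your argument is correct and follows essentially the same route as the paper: assume the $\limsup$ is small, use Bernstein to pass from $\|u_{\leq N}\|_p$ to $\|\nabla u_{\leq N}\|_\infty$, observe that the powers of $\lambda_N$ cancel by criticality, and invoke Theorem~\ref{thm:BKM}. The only cosmetic difference is that the paper takes a $\sup$ over the window and multiplies by its length $\lambda_q^{-2}$, whereas you keep the pointwise factor $(T-t)^{(3-p)/(2p)}$ and integrate; both yield the same bound up to a constant depending only on $p$. Your remark about the $O(1)$ shift when $c>1$ (so that $q(t)\geq N-b$ rather than $q(t)\geq N$ on the window) is apt---the paper's own proof has the same harmless slack, absorbed into the implicit constants.
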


We note that when $p=3$ condition \eqref{eq:leray_lp_norm} neither follows from nor does it include the result of Escauriaza-Seregin-\v Sver\' ak \cite{ESS}. In contrast to a blowup rate for $\|u \|_p$ when $p> 3$, the equation \eqref{eq:leray_lp_norm} is a lower bound that goes to zero as $t\to T$- for Lebesgue norms $\|u \|_p$ with $2 \leq p< 3$, which is quite interesting. We conjecture that similar result should hold for the convergence rate of $\|u(t) -u(T\text{-}) \|_p$ as $t \to T$-.

\subsection{Some remarks on the main results}

\begin{remark}
It is clear that Theorem \ref{thm:BKM} improves over the classical BKM. However it is difficult to compare with the results of Planchon \cite{p}, Cheskidov-Shvydkoy \cite{cs-reg0,cs-reg} and Cheskidov-Dai \cite{CD}. One of the reasons for such difficulty is that Theorem \ref{thm:BKM} uses $L^\infty$ norm in space while those results used Besov norms in space \cite{CD,cs-reg0,cs-reg} or space-time mixed Besov space \cite{p}. All listed results can be viewed as the steps towards the following conjecture:
\begin{equation*}\label{eq:BKMgoal} 
\limsup_{p \to \infty}   \int_{T -c\lambda_p^{-2}}^T  \| \nabla \times u_{ p} \|_{\infty}  dt \lesssim 1  \quad \Leftrightarrow \quad u \, \text{is regular} .
\end{equation*}
\end{remark}

\begin{remark}
We note that Theorem \ref{thm:BKM} is a universal criterion. One can easily derive a family of Ladyzhenskaya-Prodi-Serrin-type criteria from it. For example suppose $\frac{2}{r}+ \frac{3}{s} = 1$ with $s >3$ then \eqref{eq:BKM} holds if 
$$
\limsup_{p \to \infty }   \int_{T -c\lambda_p^{-2}}^T  \|   u_{\leq p} \|_{s}^r  dt \lesssim 1.
$$
\end{remark}

\begin{remark}
Since the frequency localization in \eqref{eq:BKM} is explicit in terms of time scales, our critical wavenumber $\lambda_{q(t)} = c(T-t)^{ -1/2}  $ is $L^{2,w}$ while in the conditions \eqref{eq:BKM_CS} and \eqref{eq:BKM_CD}, the results of Cheskidov-Shvydkoy \cite{cs-reg0,cs-reg} and Cheskidov-Dai \cite{CD} respectively, the wavenumber $\Lambda(t)$ was only known to be $\Lambda\in L^1$ for Leray-Hopf solutions and no explicit formula is known so far for $\Lambda (t)$. 
\end{remark}

\begin{remark}
Previously regularity of weak solutions in $B^{-1}_{\infty,\infty}$ is known if the norm $\|u \|_{L^\infty_t B^{-1}_{\infty,\infty}}$ is small or the jump $\limsup_{t \to T}\|u(t)-u(T) \|_{B^{-1}_{\infty,\infty}} $ is small (cf. \cite{cs-reg0}). The smallness assumption in \eqref{eq:ESS_B-1_infty} is essentially only for a single mode on the associated time interval and hence is satisfied under these two smallness conditions. Moreover the norm $\| u(t) \|_{B^{-1}_{\infty,\infty}}$ are allowed to have large jump or even blow up at $t=T$ under the condition \eqref{eq:ESS_B-1_infty}.
This is in line with the form of \eqref{eq:BKM}, which in a sense tells us that if blowup occurs then on the Fourier side it must be under the curve $\lambda_{q(t)}=c(T-t)^{-\frac{1}{2}}$.
\end{remark}

\subsection{Organization of the paper}
The rest of the paper is organized as follows. In Section \ref{sec:2} we briefly introduce Littlewood-Paley theory and notations used in this paper. The idea of proving the main results is to first obtain regularity in terms of small dissipation and then to do bootstrap by a dynamical argument. These will be done in Section \ref{sec:3} and Section \ref{sec:4} respectively. With all ingredients in hand we prove the main results in Section \ref{sec:5}.

\subsection*{Acknowledgements}
The author would like to thank his advisor Alexey Cheskidov for stimulating conversations and his constant encouragement. The author was partially supported by the NSF grant DMS 1517583 through his advisor Alexey Cheskidov.

\section{Preliminaries}\label{sec:2}
\subsection{Notations}
We denote by $A \lesssim B$ an estimate of the form $A \leq CB $ with some
absolute constant $C$, and by $A \sim B$ an estimate of the form $C_1B \leq  A \leq  C_2B $ with some absolute constants $C_1$, $C_2$. We write $\| \cdot \|= \|\cdot \|_{L^p} $ for Lebesgue norms. The symbol $(\cdot,\cdot)$ stands for the $L^2$-inner product. For any $p\in \mathbb{N}$ and $t>0$ we let $\lambda_p=2^p$ be the standard dyadic number.

\subsection{Littlewood-Paley decomposition}
We briefly introduce a standard Littlewood-Paley decomposition. For a detailed background on harmonic analysis we refer to \cite{Ca}. Let $\chi : \mathbb{R}^+ \rightarrow \mathbb{R}$ be a smooth function so that $\chi(\xi) =1$ for $\xi \leq \frac{3}{4}$, and $\chi(\xi) =0$ for $\xi \geq 1$. We further define $\varphi(\xi)=\chi(\lambda_1^{-1 }\xi) -\varphi(\xi) $  and $\varphi_q(\xi) = \varphi(\lambda_q^{-1}\xi)$. For a tempered distribution vector field $u$ let us denote
\begin{align*}
u_q := \mathcal{F}^{-1}(\varphi_q)*u \quad \text{for } q>-1  \quad \text{and}\quad  u_{-1} := \mathcal{F}^{-1}(\chi)*u,
\end{align*}
where $\mathcal{F}$ is the Fourier transform. With this we have $u=\sum_{q \geq -1}u_q$ in the sense of distribution. We also use the notation $\Delta_q :=\mathcal{F}^{-1}(\varphi_q)* $ for the Littlewood-Paley projection.

Also let us finally note that the Besov space $B^{s}_{p,q}$ is the space consisting of all tempered distributions $u$ satisfying
$$
\| u\|_{B^{s}_{p,q}}:=   \Big\| \lambda_r^s \|u_r \|_p \Big\|_{l^q} < \infty.
$$

By Littlewood-Paley theorem we note that for any $s \in \mathbb{R}$
$$
\|u \|_{H^s} \sim \| u\|_{B^{s}_{2,2}}.
$$

Finally let us recall the following version of Bernstein's inequality.
\begin{lemma}
Let $u$ be a tempered distribution in $\mathbb{R}^n$, and $r \geq s \geq 1$. Then for any $q$ we have that
$$
\| u_q\|_r \lesssim \lambda_q^{n(\frac{1}{s} -\frac{1}{r})} \| u_q\|_r .
$$
\end{lemma}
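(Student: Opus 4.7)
The statement as displayed evidently contains a typographical slip (the right-hand norm should be $\|u_q\|_s$ rather than $\|u_q\|_r$); I will sketch the standard proof of Bernstein's inequality under this natural reading.

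The plan is to exploit the fact that $u_q$ has Fourier support in the annulus $\{\lambda_q \cdot \tfrac{3}{4} \leq |\xi| \leq \lambda_q\}$ (with an analogous ball for $q=-1$). First, I would choose an auxiliary smooth bump $\widetilde{\varphi}$ supported in a slightly larger annulus and equal to $1$ on the support of $\varphi$, and set $\widetilde{\varphi}_q(\xi) = \widetilde{\varphi}(\lambda_q^{-1}\xi)$ and $\psi_q = \mathcal{F}^{-1}(\widetilde{\varphi}_q)$. Because $\widetilde{\varphi}_q \equiv 1$ on $\Supp \varphi_q$, we have the reproducing identity $u_q = \psi_q * u_q$.

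Next, I would apply Young's convolution inequality with exponents $(t,s,r)$ satisfying $1 + \tfrac{1}{r} = \tfrac{1}{t} + \tfrac{1}{s}$, giving
\begin{equation*}
\|u_q\|_r \leq \|\psi_q\|_t \, \|u_q\|_s.
\end{equation*}
The kernel $\psi_q$ scales as $\psi_q(x) = \lambda_q^n \psi(\lambda_q x)$ with $\psi = \mathcal{F}^{-1}(\widetilde{\varphi})$, which is Schwartz. A direct change of variables yields
\begin{equation*}
\|\psi_q\|_t = \lambda_q^{n(1 - 1/t)} \|\psi\|_t = \lambda_q^{n(1/s - 1/r)} \|\psi\|_t,
\end{equation*}
and since $\psi$ is Schwartz we have $\|\psi\|_t < \infty$ for every $t \in [1,\infty]$. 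Combining these gives the claimed bound with an absolute constant depending only on $n$, $s$, $r$ and the fixed bump $\widetilde{\varphi}$. The edge case $q=-1$ is handled identically by replacing the annular bump with a ball bump.

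There is no genuine obstacle here; the only point requiring care is the selection of the auxiliary multiplier $\widetilde{\varphi}$ so that the reproducing identity $u_q = \psi_q * u_q$ holds exactly, which is what permits the use of Young's inequality on the frequency-localized piece rather than on the full distribution $u$. The remaining work is the scaling computation for $\|\psi_q\|_t$, which is routine.
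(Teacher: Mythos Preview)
Your proof is correct and is exactly the standard argument for Bernstein's inequality via the reproducing identity and Young's convolution inequality; you also correctly spotted the typographical error ($\|u_q\|_s$ on the right). There is nothing to compare against here: the paper merely \emph{recalls} this lemma as a classical fact and gives no proof of its own.
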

\section{Energy, dissipation and flux}\label{sec:3}
In this section we develop necessary estimates on the cutoff energy, dissipation and the energy flux. For any $p\in\N$ we define the energy flux through wavenumber $p$:
$$
\Pi_{\geq p}(t) = \int_{\R^3} (u\cdot \nabla)  u \cdot \Delta_{ \geq p}(u_{\geq p}) dx .
$$
It is worth noting that in \cite{ccfs} the definition of energy flux through shell $q$ was defined differently via multiplying \eqref{eq:3dNSE} by $\Delta_{ \leq p}(u_{\leq p})$ since the goal there is to prove the energy equality. In contrast we need to study the behavior of energy and dissipation at high modes at each time scale to rule out the blowup. Here the definition is well-defined since we are on the interval of regularity.

\subsection{Cutoff Energy inequality }
The first proposition is a variation of known result on the nonlinear term of the 3D NSE and 3D Euler established in \cite{ccfs}.
\begin{prop}\label{prop:cutoff-e}
Let $u$ be a Leray-Hopf weak solution. Suppose $(T',T)$ is an interval of regularity for $u$. Then for any $t \in (T',T)$ the cutoff energy function $ \|u_{\geq p}(t) \|_2^2$ verifies the following inequality:
\begin{equation}\label{eq:cutoff_energy}
\frac{d}{dt} \|u_{\geq p}  \|_2^2 +  2 \|\nabla u_{\geq p}  \|_2^2  \leq   \big| \Pi_{\geq p}(t) \big| ,
\end{equation}
where for any $t \in (T',T)$ the term $|\Pi_{\geq p}(t)|$ verifies
\begin{equation}\label{eq:fluxabove}
\big| \Pi_{\geq p}(t) \big|  \lesssim \Big[ \sum_{r \leq p}\lambda_{r-p }^2\|u_r \|_2^2 + \sum_{r > p}  \|u_r \|_2^2 \Big]\|\nabla u_{< p} \|_\infty.
\end{equation}

\end{prop}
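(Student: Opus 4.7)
To prove the cutoff energy inequality \eqref{eq:cutoff_energy}, I would test the Navier--Stokes equation against $u_{\geq p}$. Since $u_{\geq p}$ is divergence free the pressure gradient drops out, the dissipation yields $\|\nabla u_{\geq p}\|_2^2$, and the nonlinear term produces $\Pi_{\geq p}(t)$ (using $\Delta_{\geq p} u_{\geq p}=u_{\geq p}$). Doubling the resulting identity and bounding the nonlinearity by its absolute value gives \eqref{eq:cutoff_energy} up to a universal constant.

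For the flux bound \eqref{eq:fluxabove}, I would first exploit incompressibility $\int(u\cdot\nabla u)\cdot u=0$ and one integration by parts to rewrite
\[
\Pi_{\geq p}(t)=-\int(u\cdot\nabla u)\cdot u_{<p}\,dx=\int u^i u^j \partial_i u_{<p}^j\,dx.
\]
Expanding each factor of $u$ as $u_{<p}+u_{\geq p}$ yields four terms; the two in which the second factor is $u_{<p}^j$ equal $\tfrac{1}{2}\int v\cdot\nabla|u_{<p}|^2\,dx$ for $v=u_{<p}$ and $v=u_{\geq p}$ respectively, and both vanish by $\nabla\cdot v=0$. What remains is $\Pi_{\geq p}=T_1+T_2$ where
\[
T_1:=\int u_{<p}^i u_{\geq p}^j\partial_i u_{<p}^j,\qquad T_2:=\int u_{\geq p}^i u_{\geq p}^j\partial_i u_{<p}^j.
\]
The high--high--low piece is handled by H\"older: $|T_2|\leq \|u_{\geq p}\|_2^2\|\nabla u_{<p}\|_\infty=\sum_{r\geq p}\|u_r\|_2^2\cdot\|\nabla u_{<p}\|_\infty$, producing the entire ``$r>p$'' sum of \eqref{eq:fluxabove} together with the $r=p$ contribution (weight $\lambda_{p-p}^2=1$) in the first sum.

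For $T_1$ the key observation is that $u_{<p}\otimes u_{<p}$ has Fourier support in $\{|\xi|\leq 2\lambda_p\}$, so only the blocks $u_p,u_{p+1}$ of $u_{\geq p}$ interact nontrivially, giving $T_1=\int u_{<p}\cdot\nabla u_{<p}\cdot(u_p+u_{p+1})\,dx$. A Bony paraproduct decomposition of $u_{<p}\cdot\nabla u_{<p}$ combined with the triple Fourier-support constraint further forces at least one of the two $u_{<p}$ factors to lie in a near-top window $[p-C,p-1]$, since pairs of LP indices $(r,s)$ with $\max(r,s)\leq p-C$ cannot produce frequencies reaching $\lambda_p/2$. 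H\"older placing $\|\nabla u_{<p}\|_\infty$ on the derivative factor, followed by AM--GM on the remaining $L^2$ factors, yields a bound of the form $\|\nabla u_{<p}\|_\infty\cdot\sum_{r\in[p-C,p-1]}\|u_r\|_2^2$. Since $\lambda_{r-p}^2=O(1)$ on this fixed window this is absorbed into $\sum_{r\leq p}\lambda_{r-p}^2\|u_r\|_2^2\cdot\|\nabla u_{<p}\|_\infty$, completing \eqref{eq:fluxabove}.

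The main obstacle is precisely the estimate for $T_1$. The crude H\"older bound $|T_1|\leq \|u_{<p}\|_2\|u_{\geq p}\|_2\|\nabla u_{<p}\|_\infty$ is far too lossy, because $\|u_{<p}\|_2^2=\sum_{r<p}\|u_r\|_2^2$ carries no dyadic decay in $r$ whereas \eqref{eq:fluxabove} admits each low mode only with the parabolic weight $4^{r-p}$. Exploiting the Fourier localization of $u_{<p}\otimes u_{<p}$ to restrict the contributing LP indices of $u_{<p}$ itself to a constant-size window just below $p$ is therefore the technical heart of the estimate; once this restriction is in place the weighted sum in \eqref{eq:fluxabove} is merely a convenient packaging of those near-top contributions.
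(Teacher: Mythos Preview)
Your treatment of \eqref{eq:cutoff_energy} and of $T_2$ is fine, and the reduction $\Pi_{\geq p}=T_1+T_2$ is correct (modulo the harmless discrepancy $\Delta_{\geq p}u_{\geq p}\neq u_{\geq p}$, which only produces an extra cross term localized at frequency $\sim\lambda_p$ and fits into the stated bound). The genuine gap is in your estimate of $T_1$.

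You correctly observe that in $T_1=\int (u_{<p}\cdot\nabla u_{<p})\cdot\tilde u_p$ the Bony decomposition forces at least one of the two $u_{<p}$ factors into the window $[p-C,p-1]$. But consider the low--high paraproduct piece, schematically
\[
\sum_{q\in[p-C,\,p-1]}\int (S_{q-2}u_{<p}\cdot\nabla)u_q\cdot\tilde u_p ,
\]
where the \emph{differentiated} factor $u_q$ is in the window while the advecting factor $S_{q-2}u_{<p}$ carries all low modes. If, as you propose, you place $\|\nabla u_{<p}\|_\infty$ on the derivative factor, the two remaining $L^2$ factors are $S_{q-2}u_{<p}$ and $\tilde u_p$; AM--GM then produces $\|S_{q-2}u_{<p}\|_2^2\sim\sum_{r<p}\|u_r\|_2^2$ with \emph{no} decay in $r$, not the window sum you claim. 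No alternative placement of norms rescues this: putting $L^\infty$ on the low factor gives $\|S_{q-2}u_{<p}\|_\infty\|\nabla u_q\|_2\|\tilde u_p\|_2$, and even after the Bernstein-type bound $\|S_{q-2}u_{<p}\|_\infty\lesssim\|\nabla u_{<p}\|_\infty$ one is left with an extra factor $\lambda_q\sim\lambda_p$ from $\|\nabla u_q\|_2$ that cannot be absorbed. Thus H\"older plus AM--GM alone do not yield $\|\nabla u_{<p}\|_\infty\cdot\sum_{r\in[p-C,p-1]}\|u_r\|_2^2$; the low modes enter with weight $1$, not $\lambda_{r-p}^2$.

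This weight is not cosmetic. The paper obtains it by the Constantin--E--Titi commutator: writing $\Delta_{\leq p}(u\otimes u)=r_p(u,u)-(u-u_{\leq p})^{\otimes 2}+u_{\leq p}^{\otimes 2}$ and estimating
\[
\|r_{p,1}(u,u)\|_1\lesssim\int\varphi_p(y)\,\|u_{\leq p}(\cdot-y)-u_{\leq p}(\cdot)\|_2^2\,dy\lesssim\lambda_p^{-2}\|\nabla u_{\leq p}\|_2^2=\sum_{r\leq p}\lambda_{r-p}^2\|u_r\|_2^2
\]
via the mean value theorem. It is precisely this increment structure that converts the full low-mode sum into the weighted one, and that weighted form is what makes the iteration in the subsequent lemma converge. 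Your frequency-localization argument captures the window restriction on \emph{one} factor but misses the quantitative smoothing that the commutator supplies for the other.
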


It is worth noting that \eqref{eq:cutoff_energy} only holds for Leray-Hopf weak solutions within the interval of regularity since otherwise one loses the cancellation property of $u$ being divergence-free. The proof for this proposition is essentially a modification of the one given in \cite{ccfs}. The novelty here is that we make full use of the natural cancellation property on the interval of regularity. For completeness we give a proof here.
\begin{proof}[Proof of Proposition \ref{prop:cutoff-e}]
The equation \eqref{eq:cutoff_energy} follows from multiplying \eqref{eq:3dNSE} by $\Delta_{\geq p} u_{\geq p}$ and integrating in space. The main part is to show \eqref{eq:fluxabove}. To this end let 
$$
\Pi_{\leq p}(t): = \int_{\R^3} (u\cdot \nabla)  u \cdot \Delta_{ \leq p}(u_{\leq p}) dx .
$$ 
Since $(T',T)$ is an interval of regularity, for any $t \in (T',T)$ by divergence-free cancellation $\int_{\R^3} (u\cdot \nabla)  u \cdot u ~dx =0$ we have
$$
\Pi_{\geq p} = -\Pi_{\leq p-1} -2 \int_{\R^3} (u\cdot \nabla)  u \cdot \Delta_{\geq p }(u_{\leq p-1}) dx .
$$
We only need to show the right hand side above obeys the right bound. 

For the second term it follows from frequency support that
\begin{align*}
\int_{\R^3} (u\cdot \nabla)  u \cdot \Delta_{\geq p }(u_{\leq p-1}) dx & \lesssim \|\Delta_{\sim p }(u \otimes u) \|_1 \| \nabla u_{\leq p-1} \|_\infty \\
&\lesssim \sum_{r \sim p}  \|u_r \|_2^2  \| \nabla u_{\leq p-1} \|_\infty.
\end{align*}
Therefore this term obeys the right bound. 

It now suffices to estimate $\Pi_{\leq p} $. Following \cite{CET,ccfs} we write
$$
\Delta_{\leq p}(u \otimes u) = r_{p}(u,u) -(u-u_{\leq p})\otimes(u-u_{\leq p}) + u_{\leq p} \otimes u_{\leq p} ,
$$
where 
$$
r_{p}(u,u)  = \int \varphi_p(y)[u(x-y)-u(x))\otimes(u(x-y)-u(x)] dy .
$$ 
We further the first term decompose as
\begin{equation*}
r_{p}(u,u) =r_{p,1}(u,u)+r_{p,2}(u,u):=\Delta_{\leq p}[r_{p}(u,u)]+\Delta_{> p}[r_{p}(u,u)].
\end{equation*}
After substituting it into the flux we find that
\begin{align*}
\Pi_{\leq p}(u) =  &\int_{\R^3} \Tr[ r_{p,1}(u,u)\cdot \nabla u_{\leq p}]dx +\Tr[ r_{p,2}(u,u)\cdot \nabla u_{\leq p}]dx\\
& -\int_{\R^3} \Tr[ (u-u_{\leq p})\otimes(u-u_{\leq p}) \cdot \nabla u_{\leq p}]dx :=\Pi_1+\Pi_2+\Pi_3.
\end{align*}
To bound these terms we first use the Minkowski inequality obtaining
$$
\| r_{p,1}(u,u)\|_1 \lesssim \int \varphi_p(y)\|u_{\leq p}(\cdot-y)-u_{\leq p}(\cdot))\|_2^2 dy.
$$
By the Mean Value Theorem it follows that
$$\\
\| r_{p,1}(u,u)\|_1 \lesssim \sum_{r \leq p}\|\nabla u_r \|_2^2   \int \varphi_p(y) |y|^2dy \lesssim  \sum_{r \leq p}    \|u_r \|_2^2 \lambda_{|p-r|}^{-2}.
$$
In a similar manner we can also estimate
$$
\| r_{p,2}(u,u)\|_1 \lesssim \int \varphi_p(y)\|u_{> p}(\cdot-y)-u_{> p}(\cdot)\|_2^2 dy \lesssim \sum_{r > p}  \|u_r \|_2^2.
$$
Collecting estimates for $r_{p,1}(u,u)$ and $r_{p,2}(u,u)$, by H\"older's inequality we have
\begin{equation*}
|\Pi_1| \leq  \| r_{p,1}(u,u)\|_1  \| \nabla u_{\leq p} \|_\infty   \lesssim  \sum_{r \leq p}    \|u_r \|_2^2 \lambda_{|p-r|}^{-2} \| \nabla u_{\leq p} \|_\infty,
\end{equation*}
\begin{equation*}
|\Pi_2| \leq  \| r_{p,2}(u,u)\|_1  \| \nabla u_{\leq p} \|_\infty   \lesssim \sum_{r > p}   \|u_r \|_2^2\| \nabla u_{\leq p}  \|_\infty 
\end{equation*}
and 
\begin{equation*}
|\Pi_3| \leq  \| u_{> p}\|_2^2  \| \nabla u_{\leq p} \|_\infty   \lesssim \sum_{r > p}   \|u_r \|_2^2\| \nabla u_{\leq p} \|_\infty .
\end{equation*}

It is now clear that \eqref{eq:fluxabove} follows from these estimates.
\end{proof}
	
\subsection{Energy and dissipation}
Suppose $(T',T)$ is an interval of regularity for $u$. We let $I_p(T) = [T-\lambda_p^{-2},T)$ and define two sequences: 
$$
E_p(T) =\sup_{I_p}\|u_{\geq p}  \|_2^2 ,
$$
and 
$$
D_p(T) =\int_{I_p} \|\nabla u_{\geq p} (s) \|_2^2 ds.
$$
These two quantities are scaling invariant.

In the remaining of this section we will derive estimates for $D_p(T)$ and $E_p(T)$ involving energy flux on small time intervals. For notational simplicity we often write $D_p$ and $E_p$ when their meanings are clear from the context.

The next lemma is to control energy and dissipation on a small time interval by the flux and the dissipation on a larger time interval. In view of partial regularity theory of the 3D NSE, if $D_p$ has decay $\lambda_p^{-1}$ then the solution should be regular.  
\begin{lemma}\label{lemma:1st_iter}
For any $0<\alpha<2$ there exists a constant $b=b(\alpha) \in \N$ with the following property. Suppose $(T',T)$ is an interval of regularity for Leray-Hopf weak solution $u$. For any $p \in \N $ sufficiently large we have
\begin{equation}\label{eq:1st_iter}
 \max\{E_p,D_p \} \leq     \lambda_{b}^{-\alpha} D_{p-b}  + \int_{I_{p-b}}\big|\Pi_{\geq p}(t) \big| dt.
\end{equation}
\end{lemma}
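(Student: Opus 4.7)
The plan is to integrate the cutoff energy inequality \eqref{eq:cutoff_energy} from a well-chosen initial time $s$ up to $t\in I_p$, and then average the resulting pointwise bound over $s$ lying in a specific sub-interval of $I_{p-b}$. Concretely, I would set
$$ J := [T-\lambda_{p-b}^{-2},\, T-\lambda_p^{-2}] \subset I_{p-b}, $$
so $J$ sits to the left of $I_p$ and has length $|J|=\lambda_{p-b}^{-2}-\lambda_p^{-2}=\lambda_{p-b}^{-2}(1-\lambda_b^{-2})$, which for $b\geq 1$ is comparable to $\lambda_{p-b}^{-2}$. For any $s\in J$ and any $t\in I_p$ we have $s\le t$ and $[s,t]\subset I_{p-b}$, so integrating \eqref{eq:cutoff_energy} from $s$ to $t$ gives
$$ \|u_{\geq p}(t)\|_2^2 + 2\int_s^t \|\nabla u_{\geq p}\|_2^2\,d\tau \;\le\; \|u_{\geq p}(s)\|_2^2 + \int_{I_{p-b}} |\Pi_{\geq p}(\tau)|\,d\tau. $$
Taking $\sup_{t\in I_p}$ on the left bounds $E_p$, and letting $t\to T^-$ with $s$ fixed bounds $2D_p$; in either case the right-hand side is still $\|u_{\geq p}(s)\|_2^2$ plus the flux integral.

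The next step is to turn the residual $\|u_{\geq p}(s)\|_2^2$ into a piece of $D_{p-b}$ by averaging in $s$. Since $u_{\geq p}$ is spectrally supported on frequencies $\gtrsim \lambda_p$, Bernstein's inequality (or the Poincar\'e-type bound on high modes) yields $\|u_{\geq p}(s)\|_2^2 \le \lambda_p^{-2}\|\nabla u_{\geq p}(s)\|_2^2$. Averaging in $s\in J$ and using $\|\nabla u_{\geq p}\|_2\le \|\nabla u_{\geq p-b}\|_2$ together with $J\subset I_{p-b}$ gives
$$ \frac{1}{|J|}\int_J \|u_{\geq p}(s)\|_2^2\,ds \;\le\; \frac{\lambda_p^{-2}}{|J|}\int_{I_{p-b}} \|\nabla u_{\geq p-b}\|_2^2\,ds \;\le\; \frac{\lambda_p^{-2}}{|J|}\,D_{p-b}. $$
Substituting the value of $|J|$, the prefactor is $\lambda_p^{-2}/|J|\le C\,\lambda_b^{-2}$ with a universal constant $C=C(b)\to 1$ as $b\to\infty$.

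Combining, we arrive at $\max\{E_p,D_p\}\le C\lambda_b^{-2}\,D_{p-b} + \int_{I_{p-b}}|\Pi_{\geq p}(t)|\,dt$. Since $\alpha<2$, one can choose $b=b(\alpha)\in\N$ large enough so that $C\lambda_b^{-2}\le \lambda_b^{-\alpha}$, which yields \eqref{eq:1st_iter}; the requirement that $p$ be sufficiently large is only needed to ensure $p-b\geq 0$ and $I_{p-b}\subset(T',T)$. The mildly delicate point, and the main thing to get right, is the choice of the averaging window $J$: it must be disjoint from $I_p$ so that $s<t$ throughout (allowing the forward integration of the energy inequality), must be contained in $I_{p-b}$ so the resulting dissipation integral is exactly $D_{p-b}$, and must have length of order $\lambda_{p-b}^{-2}$ so that the arithmetic of the prefactor $\lambda_p^{-2}/|J|$ produces the sharp decay $\lambda_b^{-2}$ rather than a weaker one. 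Everything else is a routine application of Bernstein's inequality and monotonicity of the spectral projectors in $L^2$.
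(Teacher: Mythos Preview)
Your argument is correct, and it differs from the paper's in a pleasant way. The paper treats $D_p$ and $E_p$ by two separate devices: for $D_p$ it multiplies the cutoff energy inequality by a Lipschitz time cutoff $\varphi_p$ (equal to $1$ on $I_p$, supported in $I_{p-b}$, with $|\varphi_p'|\le\lambda_{p-b}^2$) and integrates by parts, controlling the commutator term $\int|\varphi_p'|\,\|u_{\ge p}\|_2^2$ by $\lambda_b^{-\alpha}D_{p-b}$ via the same Bernstein bound you use; for $E_p$ it invokes a pigeonhole/mean-value step to pick a single good time $t_{p-b}\in I_{p-b}$ at which $\|\nabla u_{\ge p-b}(t_{p-b})\|_2^2\le\lambda_{p-b}^2 D_{p-b}$, and then integrates forward from $t_{p-b}$ to the time realizing the supremum. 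Your averaging over $s\in J=I_{p-b}\setminus I_p$ unifies these two steps: it is the continuous analogue of the pigeonhole choice and simultaneously plays the role of the smooth time cutoff, so one computation handles both $E_p$ and $D_p$. The two approaches yield the same estimate with the same choice of $b$ (absorbing the universal Bernstein and geometric constants into the gap $2-\alpha$), so neither buys more than the other quantitatively; yours is simply a bit more streamlined. One small point worth making explicit in your write-up: the Bernstein step $\|u_{\ge p}\|_2^2\le C_0\lambda_p^{-2}\|\nabla u_{\ge p}\|_2^2$ carries an absolute constant $C_0$ depending on the Littlewood--Paley convention, which is exactly what forces $b=b(\alpha)$ rather than a universal $b$; you acknowledge this at the end, but the intermediate display omits it.
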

\begin{remark}
Note that in this lemma $b$ is not very large since the main purpose of introducing the parameter $b$ is to absorb a geometric constant from Littlewood-Paley projection and for the results of this paper we only need $\alpha>1$.
\end{remark}
\begin{proof}[Proof of Lemma \ref{lemma:1st_iter}]
Given $0 <\alpha<2$ we fix a constant $b$ so that in view of Littlewood-Paley theory the following holds:
\begin{equation}\label{eq:choice_of_b}
\lambda_{p }^{2} \|u_{\geq p}  \|_2^2  \leq \lambda_{b}^{2-\alpha}    \|\nabla u_{\geq p}  \|_2^2 .
\end{equation}

Since  $(T',T)$ is an interval of regularity, for $p$ sufficiently large we have $I_{p-b} \subset (T',T)$. We choose Lipschitz functions $\varphi_{p}(t) : (-\infty,T] \to [0,1] $ such that for any $p$ we have $\varphi_{p}(t) =1 $ on $I_p$ and $\Supp \varphi_{p} \subset I_{p-b}$. Moreover these functions also satisfy the bound:
\begin{equation}\label{eq:timecutoff_bound}
|\frac{d}{dt}\varphi_{p}  | \leq \lambda_{p-b}^2.
\end{equation}
For instance we can take $\varphi_{p}  $ to be piecewise linear functions.

To prove \eqref{eq:1st_iter} multiplying \eqref{eq:cutoff_energy} by $\varphi_{p}$ and then integrating over time yield:
\begin{equation}\label{eq:1st_iter_firsteq}
\int_{\R}   \varphi_{p}  \frac{d}{dt }\|u_{\geq p}  \|_2^2  dt + 2 \int_{\R}   \varphi_{p}\|\nabla u_{\geq p}  \|_2^2  dt \leq   \int_{I_{p-b}}|\Pi_{\geq p}| dt.
\end{equation}
Integrating by parts for the first term we see that
\begin{equation}\label{eq:1st_iter_IBP}
\int \varphi_{p} \frac{d}{dt }\|u_{\geq p}  \|_2^2 dt = \|u_{\geq p} (T\text{-})\|_2^2 - \int  \frac{d}{dt }\varphi_{p} \|u_{\geq p}  \|_2^2 dt,
\end{equation}
where $\|u_{\geq p} (T\text{-})\|_2:= \lim_{t \to T\text{-}} \|u_{\geq p} (t)\|_2$.
Using the derivative bound \eqref{eq:timecutoff_bound} for $ \varphi_{p}$ and \eqref{eq:choice_of_b} we further obtain 
\begin{align}\label{eq:1st_iter_1_4}
\int \Big| \frac{d}{dt }\varphi_{p} \Big| \|u_{\geq p}  \|_2^2 dt & \leq \int_{I_{p}} \lambda_{p-b}^{2} \|u_{\geq p}  \|_2^2  \leq\lambda_{b}^{-\alpha}  \int_{I_{p}}  \|\nabla u_{\geq p}  \|_2^2  \nonumber \\
& \leq\lambda_{b}^{-\alpha}  D_{p-b}.
\end{align}
And thus combining \eqref{eq:1st_iter_IBP}, \eqref{eq:1st_iter_1_4} and \eqref{eq:1st_iter_firsteq} we find 
$$
D_p  \leq    \lambda_{b}^{-\alpha}  D_{p-b}  + \int_{I_{p-b}}|\Pi_{\geq p}| dt.
$$

Now we show that $E_p$ also obeys this bound. To do so we first choose a sequence $t_q \in I_{q } \setminus I_{q+1}$ satisfying the following condition:
\begin{equation*}
  \| \nabla u_{\geq q} (t_q) \|_2^2   \leq  \lambda_{q }^2 D_q.
\end{equation*}
Also for any $ q >0$ let $t_q^* \in I_{q}$ be such that $\sup_{I_q}  \|u_{\geq q}  \|_2^2=   \|u_{\geq q} (t_q^*) \|_2^2$.
Now notice that for any $p$ with $p -b >0$ we have
\begin{align}\label{ep:dp}
 \|u_{\geq p}  (t_{p-b}) \|_2^2 & \leq  \lambda_{p}^{-2}  \lambda_b^{2-\alpha} \| \nabla u_{\geq p}  (t_{p-b}) \|_2^2  \nonumber\\
 & \leq \lambda_{p}^{-2}  \lambda_b^{2-\alpha}  \| \nabla u_{\geq p-b}  (t_{p-b}) \|_2^2 \nonumber \\
 & \leq  \lambda_{b}^{-\alpha}  D_{p-b}.
\end{align}

Since $t_{p-b} <t_q^* $ we can now integrate from $t_{p-b}$ to $t_q^* $ to obtain
\begin{align}\label{eq:tpa_tp}
\sup_{I_p}   \|u_{\geq p} \|_2^2 -   \|u_{\geq p}(t_{p-b}) \|_2^2 + \int_{t_{p-b}}^{t_q^*}   \|\nabla u_{\geq p} \|_2^2 \leq  \int_{I_{p-b}} |\Pi_{\geq p}| dt.
\end{align}
Putting together \eqref{ep:dp},\eqref{eq:tpa_tp} we finally obtain
$$
E_p  \leq    \lambda_{b}^{-\alpha}  D_{p-b}  + c\int_{I_{p-b}}|\Pi_{\geq p}| dt.
$$
\end{proof}

\subsection{Bound the flux}
Now we will bound the term $\int_{I_{p-b}} |\Pi_{\geq p}| dt$ under the condition \eqref{eq:BKM} of Theorem \ref{thm:BKM} in terms of $E_r$ on much larger time interval $I_r$. This together with Lemma \ref{lemma:1st_iter} will allow us to use an iterative scheme to get the desire decay for $D_p$ as $p\to \infty$.

\begin{prop}\label{prop:BKM-to-1stiteration}
Let $u$ be a Leray-Hopf weak solution and $(T',T)$ be an interval of regularity of $u$. Suppose for some constants $b\in \N$ and $\delta>0$ we have
\begin{equation}\label{eq:BKM-to-1stiteration}
\limsup_{p \to \infty}  \int_{I_{p-b}}  \|  \nabla u_{\leq p} \|_\infty  \leq \delta  ,
\end{equation}
then for any $p \in \N$ sufficiently large we have
$$
 \int_{I_{p-b}} |\Pi_{\geq p}| dt \lesssim  \delta   \sum_{r \leq p-b}E_r \lambda_{|r-p+b|}^{-2} 
$$
where the implicit constant is independent of $p$, $\delta$ and $b$.
\end{prop}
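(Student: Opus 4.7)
The plan is to combine the pointwise-in-time flux bound \eqref{eq:fluxabove} from Proposition \ref{prop:cutoff-e} with the temporal integration against the hypothesis \eqref{eq:BKM-to-1stiteration}. Since in \eqref{eq:fluxabove} the spatial factor $\|\nabla u_{<p}\|_\infty$ separates cleanly from the sum in $r$, after pulling the supremum over $t \in I_{p-b}$ out of the bracket and using $\|\nabla u_{<p}\|_\infty \lesssim \|\nabla u_{\leq p}\|_\infty$ together with the $\limsup$ hypothesis (which bounds $\int_{I_{p-b}}\|\nabla u_{\leq p}\|_\infty dt$ by a constant multiple of $\delta$ for all sufficiently large $p$), the task reduces to bounding $\sup_{I_{p-b}} \big[\sum_{r \leq p}\lambda_{r-p}^2\|u_r\|_2^2 + \sum_{r>p}\|u_r\|_2^2\big]$ by the target weighted sum $\sum_{r \leq p-b} \lambda_{|r-p+b|}^{-2} E_r$.

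To produce this bound I would split the $r$-sum into three pieces: $r \leq p-b$, $p-b < r \leq p$, and $r > p$. For $r \leq p-b$ the crucial geometric fact is $I_{p-b} \subset I_r$, which follows immediately from $\lambda_{p-b}^{-2} \leq \lambda_r^{-2}$, and gives $\sup_{I_{p-b}}\|u_r\|_2^2 \leq \sup_{I_r}\|u_{\geq r}\|_2^2 = E_r$. Combined with the algebraic identity $\lambda_{r-p}^2 = \lambda_b^{-2}\lambda_{|r-p+b|}^{-2} \leq \lambda_{|r-p+b|}^{-2}$, this already produces exactly the sum appearing in the conclusion; the extra gain $\lambda_b^{-2}$ is simply discarded, which is why the implicit constant ends up independent of $b$.

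For the middle range $p-b < r \leq p$ I would use $\|u_r\|_2 \leq \|u_{\geq p-b}\|_2$ together with the finite geometric sum $\sum_{p-b<r\leq p}\lambda_{r-p}^2 \lesssim 1$ to get a contribution $\lesssim E_{p-b}$; for $r > p$ almost-orthogonality of Littlewood-Paley blocks yields $\sum_{r>p}\|u_r\|_2^2 \lesssim \|u_{>p}\|_2^2 \leq \|u_{\geq p-b}\|_2^2 \leq E_{p-b}$ on $I_{p-b}$. Both of these leftover pieces are absorbed into the $r=p-b$ term of the target sum, whose weight $\lambda_0^{-2}$ equals $1$. The argument is essentially bookkeeping; the only substantive input beyond Proposition \ref{prop:cutoff-e} is the elementary nesting $I_{p-b} \subset I_r$ for $r \leq p-b$, so I do not anticipate a real analytical obstacle, just the careful accounting needed to see that the constant is truly independent of $p$, $\delta$, and $b$.
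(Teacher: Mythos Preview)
Your proposal is correct and follows essentially the same route as the paper: integrate the pointwise flux bound \eqref{eq:fluxabove} over $I_{p-b}$, use the hypothesis to control the time integral of $\|\nabla u_{\leq p}\|_\infty$ by $2\delta$ for large $p$, and then bound the remaining $\sup_{I_{p-b}}$ of the $r$-sum by the target weighted sum of $E_r$. The paper compresses the last step into a single inequality (invoking only that ``the time interval is $I_{p-b}$ and $E_p$ contains all frequencies higher than $p$''), whereas your three-range split $r\leq p-b$, $p-b<r\leq p$, $r>p$ makes the same bookkeeping explicit; in particular your observation that the geometric sum $\sum_{p-b<r\leq p}\lambda_{r-p}^2\lesssim 1$ and the discarded factor $\lambda_b^{-2}$ are what keep the constant independent of $b$ is exactly what underlies the paper's terse passage.
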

\begin{proof}
Thanks to Proposition \ref{prop:cutoff-e} we introduce the split $ \int_{I_{p-a}} |\Pi_{\geq p}| dt  \leq I_1 +I_2$ with
\begin{equation}\label{eq:BKM-I_1}
I_1 = \sup_{I_{p-b}}\sum_{ r \leq p } \lambda_{|r-p|}^{-2}\|u_r \|_2^2    \int_{I_{p-b}}   \|\nabla u_{< p} \|_\infty   dt ,
\end{equation}
and
\begin{equation}\label{eq:BKM-I_2}
I_2 \lesssim  \sup_{I_{p-b}}  \sum_{ r > p} \|u_r \|_2^2  \int_{I_{p-b}}   \|\nabla u_{< p} \|_\infty   dt.
\end{equation} 

We bound these two term respectively. By assumption \eqref{eq:BKM-to-1stiteration} there exists $p_0 \in \N$ such that for any $p \geq p_0$ we have
\begin{equation}
 \int_{I_{p-b}}  \|  \nabla u_{\leq p} \|_\infty  \leq 2\delta
\end{equation}

Therefore for any $p \geq p_0$ it follows that 
\begin{equation*}
I_1 \lesssim \delta  \sup_{I_{p-b}}\sum_{ r \leq p } \lambda_{|r-p|}^{-2}\|u_r \|_2^2. 
\end{equation*}
Since the time interval is $I_{p-b}$ and $E_p$ contains all frequencies higher than $p$ we have
\begin{align*}
I_1 & \lesssim \delta \sum_{ r \leq p } \lambda_{|r-p|}^{-2} \sup_{I_{p-b}}\|u_r \|_2^2 \\
& \lesssim  \delta    \sum_{ r \leq p-b } \lambda_{|r-p+b|}^{-2}E_r.
\end{align*}

Now for $I_2$ we similarly estimate 
\begin{align*}
I_2 & \lesssim   \sup_{I_{p-b}}  \sum_{ r > p-b} \|u_r \|_2^2  \int_{I_{p-b}}   \|\nabla u_{\leq p} \|_\infty   dt \\
& \lesssim  \delta  E_{p-b}.
\end{align*}

Thus putting together the estimates for $I_1$ and $I_2$ we have
$$
\int_{I_{p-b}}|\Pi_{\geq p}| dt \lesssim   \delta   \sum_{r \leq p-b}E_r \lambda_{|r-p+b|}^{-2} .
$$
\end{proof}

The main result of this section can be summarized as the following decay estimate on $E_p$ and $D_p$.  
\begin{lemma}\label{lemma:twoconditions-to-dissipation}
For any $ 0< \alpha<2$ there exist $\delta_\alpha >0$  and $b=b(\alpha) \in \N$ with the following property:
If a Leray-Hopf weak solution $u$ that is regular on $(T',T)$ obeys the bound
\begin{equation}\label{eq:twoconditions-to-dissipation_delta}
\limsup_{p \to \infty} \int_{I_{p-b}}  \| \nabla u_{\leq p} \|_{\infty}   \leq \delta_\alpha    ,
\end{equation}
then there exists $p_0 \in \N$ so that for any $p \geq p_0$ we have
$$
\max\{D_p,E_p\} \lesssim      \lambda_p^{-\alpha}.
$$
\end{lemma}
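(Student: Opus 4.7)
The strategy is to combine Lemma \ref{lemma:1st_iter} with Proposition \ref{prop:BKM-to-1stiteration} into a single recursive inequality for the scale-invariant quantity $A_p := \max\{E_p, D_p\}$, and then close a strong induction by exploiting smallness of $\delta_\alpha$ together with a \emph{gap trick}: applying Lemma \ref{lemma:1st_iter} at an exponent $\alpha'$ strictly larger than the target decay rate $\alpha$. Concretely, fix $\alpha' := (\alpha+2)/2 \in (\alpha, 2)$ and let $b := b(\alpha')$ be the integer supplied by Lemma \ref{lemma:1st_iter} (this will also serve as the $b$ announced in the statement). For $p$ large enough that $I_{p-b} \subset (T', T)$, Lemma \ref{lemma:1st_iter} gives $A_p \leq \lambda_b^{-\alpha'} A_{p-b} + \int_{I_{p-b}}|\Pi_{\geq p}|\, dt$, while assumption \eqref{eq:twoconditions-to-dissipation_delta} allows Proposition \ref{prop:BKM-to-1stiteration} to bound the flux by $C\delta_\alpha \sum_{r \leq p-b} A_r \lambda_{p-b-r}^{-2}$. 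Combining these produces the master recursion
$$
A_p \leq \lambda_b^{-\alpha'} A_{p-b} + C\delta_\alpha \sum_{r \leq p-b} A_r \lambda_{p-b-r}^{-2}.
$$

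Next I would prove, by strong induction on $p$, that $A_p \leq K \lambda_p^{-\alpha}$ for all $p$, with $K$ chosen as follows. Set $M := \sup_t \|u(t)\|_2^2 < \infty$ (finite by the energy inequality), pick $p_0$ large enough that the master recursion holds for $p \geq p_0$, and define $K := M \lambda_{p_0 + b}^\alpha$. For $p \leq p_0 + b$ the bound $A_p \leq M \leq K \lambda_p^{-\alpha}$ holds trivially, handling the base range. For $p > p_0 + b$, assume $A_r \leq K \lambda_r^{-\alpha}$ for all $r < p$ and apply the recursion. The first term contributes $\lambda_b^{-\alpha'} K \lambda_{p-b}^{-\alpha} = K \lambda_b^{\alpha - \alpha'} \lambda_p^{-\alpha}$, where $\lambda_b^{\alpha - \alpha'} < 1$ because $\alpha < \alpha'$. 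For the convolution, substituting $s := p - b - r$ yields
$$
\sum_{r \leq p-b} A_r \lambda_{p-b-r}^{-2} \leq K \sum_{r=0}^{p-b} \lambda_r^{-\alpha} \lambda_{p-b-r}^{-2} = K \lambda_b^\alpha \lambda_p^{-\alpha} \sum_{s=0}^{p-b} 2^{(\alpha - 2) s} \leq K C_\alpha \lambda_b^\alpha \lambda_p^{-\alpha},
$$
where $C_\alpha := (1 - 2^{\alpha - 2})^{-1} < \infty$ because $\alpha < 2$.

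Adding the two contributions gives
$$
A_p \leq K \lambda_p^{-\alpha} \bigl( \lambda_b^{\alpha - \alpha'} + C C_\alpha \lambda_b^\alpha \delta_\alpha \bigr).
$$
I then pick $\delta_\alpha$ small (depending on $\alpha$ and $b$) so that the bracketed factor is at most $1$, for instance $\delta_\alpha := (1 - \lambda_b^{\alpha - \alpha'})/(2 C C_\alpha \lambda_b^\alpha)$. The induction then closes at $A_p \leq K \lambda_p^{-\alpha}$, giving $\max\{D_p, E_p\} \lesssim \lambda_p^{-\alpha}$ as required.

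The principal obstacle is that a direct induction applying Lemma \ref{lemma:1st_iter} at the target exponent $\alpha$ itself does \emph{not} close: the recursion then has multiplicative factor of the form $1 + C\delta$ per step rather than a strict contraction, so the inductive constant $K$ grows geometrically with $p$. The gap trick --- inflating the exponent to $\alpha' > \alpha$, which is legitimate because the parameter $b$ in Lemma \ref{lemma:1st_iter} only needs to exceed a Bernstein-type threshold --- produces the genuine contracting factor $\lambda_b^{\alpha - \alpha'} < 1$ that, combined with smallness of $\delta_\alpha$, tames the geometric convolution in the master recursion.
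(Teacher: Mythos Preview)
Your proposal is correct and follows essentially the same route as the paper. Both arguments choose the auxiliary exponent $\alpha' = (\alpha+2)/2$ (the paper writes it as $\alpha + \tfrac{2-\alpha}{2}$), fix $b = b(\alpha')$ from Lemma~\ref{lemma:1st_iter}, combine that lemma with Proposition~\ref{prop:BKM-to-1stiteration} into the same recursion, and close an induction by making $\delta_\alpha$ small enough that the contracting factor $\lambda_b^{\alpha-\alpha'}<1$ absorbs the convolution term. The only cosmetic difference is that the paper inducts block by block (on $k$ over intervals of length $b$), whereas you run a direct strong induction on $p$; the underlying mechanism and the choice of constants are the same.
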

\begin{proof}
Given any $0 < \alpha<2$ by Lemma \ref{lemma:1st_iter} and Lemma \ref{prop:BKM-to-1stiteration} there exists $b \in \N$ and large $p_0 \in \N$ such that for any $p\geq p_0$ we have
\begin{equation}\label{eq:twoconditions-to-dissipation}
\max\{D_p,E_p\}\leq   \lambda_b^{-\alpha - \frac{2-\alpha}{2}}   D_{p-b}+ c\delta  \sum_{r \leq p-b}E_r \lambda_{|r-p+b|}^{-2}.
\end{equation}
The extra factor $ \frac{2-\alpha}{2}$ is to make room for future correction of constants.

Let $M>0$ be such that $\max\{D_p,E_p\} \leq M$ for all $p$, which is possible since $u \in L^\infty L^2 \cap L^2 H^1$. Now we use induction to finish the proof. 

We claim that for any $p \geq p_0$ it verifies that
\begin{equation}\label{eq:ESS_ind}
\max\{D_p,E_p\} \lesssim \lambda_{p- p_0}^{-\alpha}.
\end{equation}

To prove this it suffices to show for any $k\in \mathbb{N}$ we have for any $p\in \N$ with $kb \leq p \leq (k+1 )b$
\begin{equation}\label{eq:ESS_ind_descrete}
\max\{D_p,E_p\} \leq  M \lambda_{b}^{-k\alpha}.
\end{equation}

First of all if  $p_0 \leq p\leq p_0 +b$, by \eqref{eq:twoconditions-to-dissipation} we have 
\begin{equation}\label{eq:ESS_ind_delta1} 
\max\{D_p,E_p\} \leq    [\lambda_b^{-\alpha- \frac{2-\alpha}{2}} +c\delta ]M   \leq M \lambda_b^{-\alpha}
\end{equation}
provided that $\delta  \lesssim_{\alpha} 1$.

Suppose that for any $k  \leq  n $ \eqref{eq:ESS_ind_descrete} holds. Then for $k=n+1$ we need to estimate for any $(n+1)b \leq p \leq (n+2)b $ the term
\begin{align*}
 \sum_{r \leq p-b}E_r \lambda_{|r-p+b|}^{-2}
\end{align*}

We can use inductive hypothesis \eqref{eq:ESS_ind_descrete} to obtain
\begin{align*}
 \sum_{r \leq p-b}E_r \lambda_{|r-p+b|}^{-2} & \leq \sum_{k \leq n }M \lambda_{b}^{-k\alpha}  \lambda_{b}^{-2|k-n|} \\
 & \leq c_\alpha M \lambda_{b}^{-(n+1)\alpha}   
\end{align*}
where $c_\alpha$ is the constant from the geometric sum (since $b$ depends on $\alpha$ as well).

Hence when $(n+1)b \leq p \leq (n+2)b $ we have that
\begin{align*}
\max\{D_p,E_p\}  & \leq   \Big( \lambda_b^{-\alpha- \frac{2-\alpha}{2}}   + c c_\alpha \delta \Big)  M \lambda_{b}^{-(n+1)\alpha} .
\end{align*}
Choosing $\delta$(depending on $\alpha$) smaller if needed, which in view of \eqref{eq:ESS_ind_delta1} is allowable, yields
\begin{align*}
\max\{D_p,E_p\}   & \leq    M \lambda_{b}^{-(n+1)\alpha} , 
\end{align*}
for any $(n+1)b \leq p \leq (n+2)b $ and hence the induction gives the desire bound.

\end{proof}

\section{Dissipation at small time scales}\label{sec:4}
In this section we will bootstrap regularity from $\max\{D_p,E_p\} \lesssim  \lambda_p^{-\alpha}$ if $\alpha >1$. In view of the classical result of Caffarelli-Kohn-Nirenburg this should be viable since the critical scaling for regularity is $D_p \lesssim \lambda_p^{-1}$. More specifically we show that there exists a constant $\delta>0$ so that if 
\begin{equation}\label{eq:evo_l2h1}
\limsup_{p \to \infty} \lambda_p \int_{I_p}   \|\nabla u_{\geq p}(s)\|_2^2 ds \leq \delta,
\end{equation}
then the solution does not blow up at $T$. The proof of this criterion builds on the ideas developed in \cite{L}. The intuition is that if the dissipation is small on high frequencies, then energy on low frequencies are controlled. On the other hand \eqref{eq:evo_l2h1} can also be viewed as an averaged smallness condition on $\|u(t) \|_{H^1} $ for high frequencies. Consequently we get smallness on both low and high frequencies which will imply the regularity of $u$ up to time $T$.

Unlike previous sections a slightly different evolution inequality for $\|u_q \|_2$ will be used here: 
\begin{equation}\label{eq:H1_eq}
\frac{d}{dt}\|u_q  \|_2^2  +\lambda_q^2 \|u_q  \|_2^2   \lesssim    \sum_{r\leq q}\lambda_r^{\frac{5}{2}} \|u_r \|_2\sum_{|r-q|\leq 2} \|u_r \|_2 \|u_q  \|_2   + \lambda_q^\frac{5}{2}\sum_{r \geq q-2} \|u_r \|_2^2  \|u_q  \|_2
\end{equation}

The proof of this estimate follows from standard paraproduct and commutator techniques. 
\begin{theorem}\label{thm:dissip-to-reg}
Let $u$ is a Leray-Hopf weak solution. Suppose \eqref{eq:evo_l2h1} holds at time $T$. Then $\|u \|_{H^1}$ is bounded on $[T-\epsilon,T]$ for some $\epsilon>0$. In particular $u$ does not blow up at $T$.
\end{theorem}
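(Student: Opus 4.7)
The plan is to first convert the averaged smallness in \eqref{eq:evo_l2h1} into a pointwise scale-critical smallness at a time $t_p$ close to $T$, then propagate this through the evolution \eqref{eq:H1_eq} by a bootstrap argument up to $T$, and finally upgrade the resulting dissipative decay into a uniform $H^1$ bound on some interval $[T-\epsilon, T]$.

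For the first step, Bernstein's inequality applied to $u_{\geq p}$ gives $\int_{I_p}\|u_{\geq p}(s)\|_2^2\,ds \leq \lambda_p^{-2}\int_{I_p}\|\nabla u_{\geq p}(s)\|_2^2\,ds \leq \delta\lambda_p^{-3}$, and a pigeonhole argument on the left half of $I_p$ produces $t_p \in [T-\lambda_p^{-2}, T-\lambda_p^{-2}/2]$ with $\lambda_p\|u_{\geq p}(t_p)\|_2^2 \lesssim \delta$, i.e.\ a scale-critical $\dot H^{1/2}$-type smallness at the distinguished time $t_p$.

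For the second step, define the scale-invariant quantity $a_q(t) := \lambda_q\|u_q(t)\|_2^2$ and run a continuity/bootstrap argument. Assuming inductively that $a_q(t) \leq A\delta$ on $[t_p,\tau]$ for every $q\geq p$, each cubic term on the right of \eqref{eq:H1_eq}, after Cauchy--Schwarz in the paraproduct sums and summation of a geometric series in $|q-r|$, is bounded by $\delta^{1/2}\lambda_q^2\|u_q\|_2^2$, and is therefore absorbed by half of the dissipation for $\delta$ sufficiently small. This gives
$$\frac{d}{dt}\|u_q\|_2^2 + \tfrac12\lambda_q^2\|u_q\|_2^2 \leq 0 \quad \text{on }[t_p,\tau],$$
which after integration improves the assumed bound on $a_q(\tau)$ and extends $\tau$ all the way to $T$.

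For the third step, the above differential inequality yields $\|u_q(t)\|_2^2 \leq \delta\lambda_p^{-1}e^{-\lambda_q^2(t-t_p)/2}$ for $t\in[t_p,T]$ and $q\geq p$. Choosing $\epsilon := \lambda_p^{-2}/4$ forces $t-t_p \geq \lambda_p^{-2}/4$ for all $t\in[T-\epsilon,T]$, whence $\sum_{q\geq p}\lambda_q^2\|u_q(t)\|_2^2 \lesssim \delta\lambda_p\sum_{m\geq 0}2^m e^{-4^m/8} \lesssim \delta\lambda_p$, while $u_{<p}$ is bounded in $H^1$ by $\lambda_p\|u\|_{L^\infty L^2}$ through Bernstein and the Leray--Hopf energy bound. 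Combining the two yields $\sup_{t\in[T-\epsilon,T]}\|u(t)\|_{H^1} < \infty$, precluding blowup at $T$. The main obstacle is the bootstrap in the second step: the three cubic terms in \eqref{eq:H1_eq} have distinct ranges of summation and distinct $\lambda$-powers, so the extraction of the $\delta^{1/2}\lambda_q^2\|u_q\|_2^2$ bound relies delicately on the $\dot H^{1/2}$-criticality of $a_q$ matching the $\dot H^{3/2}$-gain supplied by the dissipation; the non-small contributions from the finitely many modes $r<p$ must also be handled, but these enter only into the evolution of modes $q$ with $|q-r|$ bounded and so can be controlled via the Leray--Hopf $L^\infty L^2$ bound.
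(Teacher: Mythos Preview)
Your bootstrap in Step~2 has two genuine gaps that prevent the argument from closing.

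\textbf{The initial condition for the bootstrap is not met.} Your pigeonhole in Step~1 gives only $L^2$--level smallness, $\|u_{\geq p}(t_p)\|_2^2\lesssim \delta\lambda_p^{-1}$. This does \emph{not} imply $a_q(t_p)=\lambda_q\|u_q(t_p)\|_2^2\leq A\delta$ uniformly in $q\geq p$; all you can say is $a_q(t_p)\leq \delta\,\lambda_q/\lambda_p$, which blows up as $q\to\infty$. So the bootstrap hypothesis is already violated at the starting time. The paper avoids this by applying the Mean Value Theorem to the dissipation integral itself, obtaining a time $\tau_q\in I_q$ with $\sum_{r\geq q}\lambda_r^2\|u_r(\tau_q)\|_2^2\leq \delta\lambda_q$, which is precisely the $H^1$--level smallness needed to seed the continuity argument.

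\textbf{The low--mode and absorption claims are incorrect.} The statement that modes $r<p$ ``enter only into the evolution of modes $q$ with $|q-r|$ bounded'' is false: in \eqref{eq:H1_eq} the factor $\sum_{r\leq q}\lambda_r^{5/2}\|u_r\|_2$ contains \emph{all} $r\leq q$, for every $q\geq p$. With only the $L^\infty L^2$ bound on $r<p$ this factor is $\gtrsim \lambda_p^{5/2}$, carrying no $\delta$--smallness, so the contribution cannot be absorbed by the dissipation for $q$ comparable to $p$. More generally, the claim that each cubic term is $\lesssim \delta^{1/2}\lambda_q^2\|u_q\|_2^2$ cannot hold: the middle factor $\sum_{|r-q|\leq 2}\|u_r\|_2$ involves $u_{q\pm1},u_{q\pm2}$, not $\|u_q\|_2$, so one never produces a clean $\|u_q\|_2^2$; at best one gets, after Young's inequality, a differential inequality with a nonzero forcing term. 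The paper handles both issues by first running a separate ``Step~1'' that proves $\sup_{I_r}\|u_r\|_2^2\lesssim \delta\lambda_r$ for each large $r$ (this is where the missing $\delta^{1/2}$ for intermediate modes comes from), and then in Step~2 proving a forced inequality $\frac{d}{dt}\|u_p\|_2+\lambda_p^2\|u_p\|_2\lesssim \delta\lambda_q\lambda_p^{1/2}$ rather than an unforced one; Gronwall with this forcing then closes the bootstrap on $\sum_{r\geq q}\lambda_r^2\|u_r\|_2^2$.
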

\begin{proof}
The exact value of $\delta$ will be chosen in the end and shall be universal. The proof consists of two steps. The first step is to bound the energy on low modes and large time interval. The second step is to take advantage of the averaged form of \eqref{eq:evo_l2h1} so that we can use a continuity argument.

\textbf{Step 1:} Bounding lower modes.

Let $t_q\in I_q$ be such that $\| u_q(t_q)\|_2  = \sup_{I_q} \|u_q \|_2$, which is possible since each $u_q$ is continuous in time. We can integrate \eqref{eq:H1_eq} from $t_q$ to $T$ to find
\begin{equation}\label{eq:1st_step_int}
\sup_{I_q}\|u_q \|_2^2 \lesssim \int_{I_q}\lambda_q^2 \|u_q(t) \|_2^2 + \sup_{I_q}\|u_q \|_2( J_1+J_2)
\end{equation}
where $J_1$ and $J_2$ are
\begin{equation}
J_1 = \int_{I_q} \sum_{r\leq q}\lambda_r^{\frac{5}{2}} \|u_r \|_2\sum_{|r-q|\leq 2} \|u_r \|_2  ,
\end{equation}
and respectively
\begin{equation}
J_2 = \int_{I_q} \lambda_q^\frac{5}{2}\sum_{r \geq q-2} \|u_r \|_2^2   .
\end{equation}
Thanks to \eqref{eq:evo_l2h1} there exists $q_0$ such that for any $q \geq q_0$ 
\begin{align}\label{eq:criterion_p_0}
\lambda_q \int_{I_q} \sum_{r \geq q} \lambda_r^2 \|u_r (s)\|_2^2 ds \leq 2\delta.
\end{align}

It then follows that for $q$ sufficiently large we have $J_2 \lesssim \delta \lambda_q^{\frac{1}{2}}$. Now it suffices to estimate the term
$$
J_1 \leq  \bigg[ \int_{I_q} \big( \sum_{r\leq q  }\lambda_r^{\frac{5}{2}} \|u_r \|_2 \big)^2  \bigg]^\frac{1}{2} \bigg[ \int_{I_q}\big( \sum_{|r-q|\leq 2} \|u_r \|_2 \big)^2  \bigg]^\frac{1}{2} :=J_{11}  \times J_{12} .
$$
Using Jensen's inequality we obtain
$$
J_{11}=[ \int_{I_q} \big( \sum_{r\leq q  }\lambda_r^{\frac{5}{2}} \|u_r \|_2 \big)^2  \bigg]^\frac{1}{2} \lesssim \lambda_q^{\frac{1}{2}}\bigg[ \int_{I_q}  \sum_{r\leq q  }\lambda_r^{4} \|u_r \|_2^2 \bigg]^\frac{1}{2},
$$ 
and
$$  
J_{12} =\bigg[ \int_{I_q}\big( \sum_{|r-q|\leq 2} \|u_r \|_2 \big)^2  \bigg]^\frac{1}{2} \lesssim \bigg[ \int_{I_q}  \sum_{|r-q|\leq 2} \|u_r \|_2^2  \bigg]^\frac{1}{2}.
$$

Again for $q$ sufficiently large immediately we have
$$  
J_{12} \lesssim \delta^\frac{1}{2} \lambda_q^{-\frac{3}{2}}.
$$

For the other term $J_{11}$ we introduce the split in the same spirit as before:
\begin{align}
J_{11}^2   & = \lambda_q \int_{I_q}  \sum_{r\leq q/2  }\lambda_r^{4} \|u_r \|_2^2 +\lambda_q \int_{I_q}  \sum_{q/2 \leq r \leq q  }\lambda_r^{4} \|u_r \|_2^2 \\
&\lesssim  \lambda_q \int_{I_q}  \sum_{r\leq q/2  }\lambda_r^{4}   +  \lambda_q\sum_{q/2 \leq r \leq q  }\lambda_r^{4} \int_{I_r}\|u_r \|_2^2 \\
& \lesssim \lambda_q + \delta \lambda_q^2 \lesssim \delta \lambda_q^2 ,
\end{align}
provided that $q$ is sufficiently large.

Therefore collecting the estimates for $J_1$ and $J_2$ there exists $q_1 \in \N$ such that for any $q \geq q_1$ we have
\begin{equation}
J_1+J_2 \lesssim \delta \lambda_q^{\frac{1}{2}}.
\end{equation}
With this bound for any $q \geq q_1$  \eqref{eq:1st_step_int} becomes
\begin{equation*}
\sup_{I_q}\|u_q \|_2^2 \lesssim  \delta \big[ \lambda_q^{\frac{1}{2}} \sup_{I_q}\|u_q \|_2  +\lambda_q\big].
\end{equation*}
And finally by Young's inequality we obtain for any $q \geq q_1$
\begin{equation}\label{eq:step1_lowmodes}
\sup_{I_q}\|u_q \|_2^2 \lesssim  \delta \lambda_q .
\end{equation}

\textbf{Step 2:} A continuity argument.

We are in the position to prove the regularity. If $\delta$ is small we will show that for some sufficiently large $q$ the bound $
\sum_{r \geq q} \lambda_r^2 \|u_r  \|_2^2 \lesssim \delta \lambda_q  
$ holds on some subinterval of $I_p$ containing the endpoint $T$. 

By the Mean Value Theorem for any $q \geq q_1$ there exists $\tau_q \in (T-\lambda_q^{-2}, T)$ such that
\begin{equation}\label{keyeq:criterion_intervaltau_q}
 \sum_{r \geq q } \lambda_r^{2} \|u_r(\tau_q) \|_2^2   \leq \delta  \lambda_q   ,
\end{equation}
i.e. the desired bound is satisfied at $t= \tau_q$.

By the continuity of strong solutions there exists an nonempty interval $[\tau_q,t_q]$ so that
\begin{equation}\label{eq:criterion_interval}
\sum_{r\geq q } \lambda_r^{ 2} \|u_r \|_2^2   \leq 2\delta \lambda_q  \quad \text{for any}  \quad t \in [\tau_q,t_q] .
\end{equation}
Next we will use a continuity argument to show that if \eqref{eq:criterion_interval} holds and $t_q \leq T$, then 
$$
\sum_{r \geq q } \lambda_r^{ 2} \|u_r(t_q) \|_2^2   < 2\delta \lambda_q  .
$$
This will imply the boundedness of $\|u \|_{H^1}$ on $[\tau_q,T]$ and hence the regularity of $u$.

Consider on $I_q$ for any $p \geq q$ the equation
\begin{equation*} 
\frac{d}{dt}\|u_p(t) \|_2 +\lambda_p^2 \|u_p(t) \|_2  \lesssim    \sum_{r\leq p}\lambda_r^\frac{5}{2} \|u_r \|_2 \sum_{|r-p|\leq 2} \|u_r \|_2 + \sum_{r \geq p-2} \|u_r \|_2^2 \lambda_p^\frac{5}{2}
\end{equation*} 
 
We claim that if $q$ is sufficiently large then on $[\tau_q,t_q]$ for any $p \geq q$ we have
 \begin{equation} \label{dissipationgronwall}
\frac{d}{dt}\|u_p(t) \|_2 +\lambda_p^2 \|u_p(t) \|_2  \lesssim    \delta   \lambda_q \lambda_p^{\frac{1}{2}}  .
\end{equation} 
With this claim it is clear that by the estimate \eqref{keyeq:criterion_intervaltau_q} and the Gronwall inequality and choosing $\delta$ small enough we have
\begin{equation}
\sum_{r \geq q } \lambda_r^{ 2} \|u_r(t_q) \|_2^2   < 2\delta \lambda_q ,  
\end{equation}
provided that $t_q \leq T$, which completes the proof.

It remains to prove \eqref{dissipationgronwall}, for which it suffices to bound $  \sum_{r\leq p}\lambda_r^\frac{5}{2} \|u_r  \|_2$ and $\sum_{r \geq p-2} \|u_r \|_2^2 $ on $[\tau_q,t_q]$. For any $ t \in [\tau_q,t_q]$ we estimate
\begin{align}\label{2ndlow}
 \sum_{r\leq p}  \lambda_r^\frac{5}{2}  \|u_r(t) \|_2 & \lesssim   \sum_{r \leq q_1}\lambda_r^\frac{5}{2}    + \sum_{  q_1 < r \leq q }  \sup_{I_r}  \lambda_r^\frac{5}{2} \|u_r \|_2   + \sum_{  q  < r \leq p }  \sup_{I_r}   \lambda_r^\frac{5}{2} \|u_r \|_2 \nonumber \\
 & \lesssim  \lambda_{q_1}^\frac{1}{2}+  \delta^\frac{1}{2} \lambda_q^\frac{1}{2} \lambda_p^\frac{3}{2} \lesssim \delta^\frac{1}{2} \lambda_q^\frac{1}{2} \lambda_p^\frac{3}{2}
\end{align}
where we have used bounded energy for $r \leq q_1$, \eqref{eq:step1_lowmodes} for $q \leq r \leq p$ and \eqref{eq:criterion_interval} for $q_1 \leq r \leq q$ in the summand. The last inequality is due to $ q \gg q_1$.

We now estimate 
\begin{align}\label{2ndhigh}
\sum_{r \geq p-2} \|u_r(t) \|_2^2 \leq  \sum_{r \geq p} \|u_r(t) \|_2^2 +\sum_{p-2 \leq  r  \leq p} \|u_r(t) \|_2^2 \lesssim  \delta  \lambda_p^{-2} \lambda_q 
\end{align}
where we have used \eqref{eq:criterion_interval} for the first part and \eqref{eq:step1_lowmodes} for the second part.

Note that all implicit constants are independent of $q$ and $p$. Putting together \eqref{2ndlow} and \eqref{2ndhigh} we obtain \eqref{dissipationgronwall}.

\end{proof}

\section{Proof of main theorems}\label{sec:5}
With all ingredients in hand we prove Theorem \ref{thm:BKM}, \ref{thm:ESS_B-1_infty} and \ref{thm:leray_lp_norm}.
\begin{proof}[Proof of Theorem \ref{thm:BKM} and \ref{thm:ESS_B-1_infty}]

To prove Theorem \ref{thm:BKM} we fix some $ 1 <\alpha <2$ to apply Lemma \ref{lemma:twoconditions-to-dissipation}. Then we choose the constant $c$ in condition \eqref{eq:BKM} so that $c\lambda_p^{-2} =   \lambda_{p-b}^{-2}$ and choose the constant $\delta_{BKM} \leq \delta_\alpha$. Now all the constants in condition \eqref{eq:BKM} have been specified. Since the condition \eqref{eq:BKM} implies \eqref{eq:twoconditions-to-dissipation_delta} we have 
$$
\max\{D_p,E_p\} \lesssim      \lambda_p^{-\alpha},
$$
for any sufficiently large $p\in \N$.

Thanks to the fact that $\alpha>1$ the condition \eqref{eq:evo_l2h1} is satisfied. Hence we can apply Theorem \ref{thm:dissip-to-reg} to conclude that the norm $\|u \|_{H^1}$ is bounded on $[T-\epsilon,T]$ for some small $\epsilon$ which finishes the proof.

Theorem \ref{thm:BKM} follows from the fact that \eqref{eq:BKM} holds when \eqref{eq:ESS_B-1_infty} is satisfied. Indeed we have 
\begin{align}
  \int_{I_{p-b}}  \| \nabla u_{\leq p} \|_{\infty}  dt & \lesssim   \int_{I_{p-b}}\sum_{r \leq p}  \lambda_r \|   u_{r} \|_{\infty}  dt  \nonumber \\
  & \lesssim  \lambda_p^{-2} \sup_{I_{p-b}}\sum_{r \leq p}  \lambda_r \|   u_{r} \|_{\infty}    .
\end{align}
Define the frequency localization operator $\widetilde{\Delta}_p = \sum_{|r-p |\leq b} \Delta_r$ so that 
\begin{equation}
 \int_{I_{p-b}}  \| \nabla u_{\leq p} \|_{\infty}  dt \lesssim  \lambda_p^{-2} \sup_{I_{p-b}}\sum_{r \leq p-b}  \lambda_r \| \widetilde{\Delta}_r  u   \|_{\infty}    .
\end{equation}
Using the energy to bound low modes it follows from the above that
\begin{align}
\int_{I_{p-b}}  \| \nabla u_{\leq p} \|_{\infty}  dt 
& \lesssim  \lambda_p^{-2} \sum_{r \leq p/2} \lambda_r^{5/2} + \lambda_p^{-2} \sum_{p/2 \leq r \leq p-b }  \sup_{I_r}\lambda_r \| \widetilde{\Delta}_r  u   \|_\infty  .
\end{align}
Therefore if \eqref{eq:ESS_B-1_infty} holds, for any sufficiently large $p$ we have
\begin{align}
\int_{I_{p-b}}  \| \nabla u_{\leq p} \|_{\infty}  dt 
& \lesssim    \lambda_p^{-3/4} +\delta_{B^{-1}_{\infty,\infty}}.
\end{align}
Choosing $\delta_{B^{-1}_{\infty,\infty}}$ suitably guarantees that condition \eqref{eq:BKM} holds.
\end{proof}

\begin{proof}[Proof of Theorem \ref{thm:leray_lp_norm}   ]
Suppose the bound fails, we have for any $t$ sufficiently close to $T$ that
$$
\|u_{\leq q(t)}(t) \|_p  (T-t)^\frac{p-3}{2p} \lesssim \delta_{L^p}.
$$
Let $\widetilde{I}_p = I_{p-b}= [T-\lambda_{p-b}^2,T)$ where $b$ is the constant from the proof of Theorem \ref{thm:BKM}. Using discrete time scales we have for any $q \in \N$ sufficiently large 
$$
\sup_{t\in \widetilde{I}_p } \|u_{\leq q }(t) \|_p \lambda_q^{-1+ \frac{3}{p}}  \lesssim \sup_{t\in \widetilde{I}_p }   \|u_{\leq q(t)}(t) \|_p (T-t)^\frac{p-3}{2p} \lesssim \delta_{L^p}.
$$ 
Since $2\leq p <3$, Bernstein's inequality gives for any $q \in \N$ sufficiently large 
$$
\lambda_q^{-2}  \sup_{t\in \widetilde{I}_p }  \| \nabla u_{\leq q}(t) \|_\infty \lesssim \lambda_q^{-1+ \frac{3}{p}}   \sup_{t\in \widetilde{I}_p }  \|  u_{\leq q}(t) \|_p \lesssim \delta_{L^p},
$$
which implies that
$$
\limsup_{q\to \infty} \lambda_q^{-2} \sup_{\widetilde{I}_q}  \| \nabla u_{\leq q} \|_\infty \lesssim \delta_{L^p}.
$$
By choosing $ \delta_{L^p}$ suitably small, this in turn guarantees \eqref{eq:BKM} holds and hence the solution does not blow up at $T$, a contradiction. 
\end{proof}


%

\end{document}